\theoremstyle{plain}
\newtheorem{theorem}{Theorem}
\numberwithin{equation}{section}
\newcommand{\ra}{\rightarrow}
\newcommand{\FF}{\mathcal{F} (V)}
\begin{document}

\title {The Clifford algebra and its antidual}

\date{}

\author[P.L. Robinson]{P.L. Robinson}

\address{Department of Mathematics \\ University of Florida \\ Gainesville FL 32611  USA }

\email[]{paulr@ufl.edu}

\subjclass{} \keywords{}

\begin{abstract}

We analyze the purely algebraic antidual $C'(V)$ of the complex Clifford algebra $C(V)$ over a real inner product space $V$. In particular, we introduce a partially defined product in $C'(V)$ and study its properties.  

\end{abstract}

\maketitle

\medbreak 

\section*{The Antidual}

\medbreak 

Throughout, $V$ will be an infinite-dimensional real vector space upon which $( \bullet | \bullet )$ is an inner product. Let $C(V)$ be `the' complex Clifford algebra of this real inner product space: this unital involutive associative complex algebra is generated by its real subspace $V$  whose elements $v$ satisfy $v^* = v$ and $v^2 = ( v | v ) \bf{1}$. 

\medbreak 

The Clifford algebra $C(V)$ carries an array of natural structures. In addition to its involution, $C(V)$ admits a unique automorphism $\gamma : C(V) \ra C(V)$ such that $\gamma (v) = - v$ whenever $v \in V$: this grading automorphism induces a direct eigendecomposition $$C(V) = C_+ (V) \oplus C_- (V)$$ where $\gamma |_{C_{\pm} (V)} = \pm I$; accordingly, $C(V)$ is naturally a superalgebra. The involutive algebra $C(V)$ admits a unique even tracial state: that is, a complex-linear functional $\tau : C(V) \ra \mathbb{C}$ normalized by $\tau ({\bf 1}) = 1$ such that if $a, b, c \in C(V)$ then $\tau (ab) = \tau (b a)$ along with $\tau (c^*) = \overline{\tau(c)}$ and $\tau (\gamma (c)) = \tau (c)$. The involution $^*$ and trace $\tau$ together provide $C(V)$ with a canonical $\gamma$-invariant complex inner product $ \langle \bullet | \bullet \rangle$ defined by the rule that if $a, b \in C(V)$ then 
$$\langle a | b \rangle = \tau (a^* b)$$
where we adopt the convention according to which $\langle a | b \rangle$ is antilinear in $a$ and linear in $b$; it is readily verified that if also $c \in C(V)$ then $ \langle a^* c | b \rangle = \langle c | a b \rangle = \langle c b^* | a \rangle$ so that in particular $ \langle a^* | b \rangle = \langle b^* | a \rangle$. For all of this and more, see [1] and [2]. 

\medbreak 

Now, let $C'(V)$ denote the full antidual of $C(V)$: thus, $C'(V)$ comprises precisely all antilinear functionals $C(V) \ra \mathbb{C}$ whether bounded or unbounded. Our primary concern in this paper is to investigate the natural structural features of this antidual. 

\medbreak 

The antidual $C'(V)$ inherits from $C(V)$ its own involution: explicitly, for $\phi \in C'(V)$ the adjoint $\phi^* \in C'(V)$ is defined by the rule that if $a \in C(V)$ then 
$$\phi^* (a) = \overline{\phi (a^*)}.$$
Further, $C'(V)$ inherits its own grading automorphism $\gamma$ defined by the rule that if $\phi \in C'(V)$ and $a \in C(V)$ then 
$$\gamma(\phi) (a) = \phi (\gamma (a)).$$
The eigenspaces of this grading have the property that $C'_{\pm} (V)$ is precisely the annihilator of $C_{\mp} (V)$ as is customary for the dual of a superspace. 

\medbreak 

The antidual $C'(V)$ is also naturally a $C(V)$-bimodule: when $\phi, \psi \in C'(V)$ and $a, b, c \in C(V)$ we define 
$$(a \cdot \psi) (c) = \psi (a^* c) \; \; {\rm and} \; \; (\phi \cdot b) (c) = \phi (c b^*).$$
This is indeed a bimodule structure: it makes $C'(V)$ both a left $C(V)$-module and a right $C(V)$-module, while if $\chi \in C'(V)$ and $a, b \in C(V)$ then 
$$a \cdot (\chi \cdot b) = (a \cdot \chi) \cdot b$$
for if also $c \in C(V)$ then 
$$[ a \cdot (\chi \cdot b)] (c) = (\chi \cdot b) (a^* c) = \chi (a^* c b^*) = (a \cdot \chi) (c b^*) = [(a \cdot \chi) \cdot b] (c)$$
by associativity in $C(V)$. This bimodule structure is compatible with the involution and the grading: if $\phi, \psi \in C'(V)$ and $a, b \in C(V)$ then $(a \cdot \psi)^* = \psi^* \cdot a^*$ and $(\phi \cdot b)^* = b^* \cdot \phi^*$ along with $\gamma (a \cdot \psi) = \gamma(a) \cdot \gamma (\psi)$ and $\gamma (\phi \cdot b ) = \gamma (\phi) \cdot \gamma (b)$. 

\medbreak 

Now, the inner product $\langle \bullet | \bullet \rangle$ on $C(V)$ engenders a canonical complex-linear embedding 
$$\Theta : C(V) \ra C'(V) : a \mapsto \Theta_a$$
given by the rule that if also $b \in C(V)$ then 
$$\Theta_a (b) = \langle b | a \rangle.$$
This embedding respects both the involution and the grading: if $a \in C(V)$ then 
$$\Theta_{a^*} = (\Theta_a)^* \; \; {\rm and} \; \; \Theta_{\gamma(a)} = \gamma( \Theta_a)$$
for if also $b \in C(V)$ then 
$$\Theta_{a^*} (b) = \langle b | a^* \rangle = \overline{ \langle b^* | a \rangle} = \overline{\Theta_a (b^*)} = (\Theta_a)^* (b)$$
and 
$$\Theta_{\gamma(a)} (b) = \langle b | \gamma(a) \rangle = \langle \gamma(b) | a \rangle = \Theta_a (\gamma(b) ) = \gamma(\Theta_a) (b).$$
This embedding also respects the $C(V)$-bimodule stucture: a similarly routine check (which already appears above) shows that if $a, b \in C(V)$ then 
$$a \cdot \Theta_b = \Theta_{a b} = \Theta_a \cdot b.$$

\medbreak 

The antidual $C'(V)$ also supports a canonical extension of the trace $\tau$: in fact, if $a \in C(V)$ then 
$$\tau (a) = \tau ({\bf 1}^* a ) = \langle {\bf 1} | a \rangle = \Theta_a ({\bf 1})$$
so that evaluation at the multiplicative identity ${\bf 1} \in C(V)$ defines an extension of the trace, thus 
$$\tau' : C'(V) \ra \mathbb{C} : \phi \mapsto \phi ({\bf 1})$$
which we shall henceforth denote simply by $\tau$. This extended trace continues to respect both involution and grading: if $\phi \in C'(V)$ then $\tau (\phi^*) = \overline{ \tau (\phi)}$ and $\tau (\gamma (\phi)) = \tau (\phi)$; in short, $\tau \circ \: ^* = \bar{\tau}$ and $\tau \circ \gamma = \tau$. 

\medbreak 

The question whether $\tau$ continues to be a trace in the sense $\tau (\phi \psi) = \tau (\psi \phi)$ demands that we first address the question whether $C'(V)$ is equipped with a canonical product. We should like such a product to make the canonical embedding $\Theta : C(V) \ra C'(V)$ not only linear but also multiplicative and hence an algebra map. Such a product should perhaps also be separately continuous for the weak topology of pointwise convergence: in terms of nets, if $\phi_{\delta} \ra \phi$ and $\psi_{\delta} \ra \psi$ then $(\phi_{\delta} \psi) (c) \ra (\phi \psi) (c)$ and $(\phi \psi_{\delta}) (c) \ra (\phi \psi) (c)$ for each $c \in C(V)$. As we shall see, no such product is defined on the whole of $C'(V)$: we shall be forced to settle for a canonical product that is only partially defined; but there does indeed exist a canonical partial product, which respects the various natural structures presented thus far. We shall set up this partial product and further analyze the antidual in the following section; as preparation for this, we introduce  finite-dimensional approximants of antifunctionals. 

\medbreak 

Direct by inclusion the set $\FF$ comprising all finite-dimensional real subspaces of $V$. Each $M \in \FF$ generates $C(M)$ as a subalgebra of $C(V)$; otherwise put, the inclusion of $M$ in $V$ is isometric and so induces a canonical embedding of $C(M)$ in $C(V)$. Of course, $C(V)$ is the union of its subalgebras $C(M)$ as $M$ runs over $\FF$. In fact, more is true. 

\medbreak 

\begin{theorem} \label{M_a} 
If $a \in C(V)$ then there exists a unique smallest $M_a \in \FF$ such that $a \in C(M_a)$. 
\end{theorem}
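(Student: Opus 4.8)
The plan is to reduce the theorem to the purely algebraic identity
$$C(M) \cap C(N) = C(M \cap N) \quad (M, N \in \FF).$$
Granting this for the moment, let $\mathcal{F}_a = \{ M \in \FF : a \in C(M) \}$; this set is nonempty because $C(V)$ is the union of the subalgebras $C(M)$, and by the displayed identity (applied repeatedly) it is closed under finite intersections. Fix $M_1 \in \mathcal{F}_a$ and choose $M_a$ among the subspaces $M \cap M_1$ with $M \in \mathcal{F}_a$ --- all of which lie in $\mathcal{F}_a$ --- so that $\dim M_a$ is as small as possible. For an arbitrary $N \in \mathcal{F}_a$, writing $M_a = M' \cap M_1$ with $M' \in \mathcal{F}_a$ gives $N \cap M_a = (N \cap M') \cap M_1$, which is again of the form $M \cap M_1$ with $M = N \cap M' \in \mathcal{F}_a$; by minimality $\dim (N \cap M_a) \ge \dim M_a$, and since $N \cap M_a \subseteq M_a$ this forces $N \cap M_a = M_a$, i.e. $M_a \subseteq N$. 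Thus $M_a \in \mathcal{F}_a$ is contained in every member of $\mathcal{F}_a$, so it is the required smallest subspace; uniqueness is immediate, since two smallest elements would each be contained in the other.

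It remains to establish $C(M) \cap C(N) = C(M \cap N)$, the inclusion $\supseteq$ being obvious since $C(M \cap N)$ is a subalgebra of both $C(M)$ and $C(N)$. Put $W = M + N \in \FF$ and choose a basis $e_1, \dots, e_n$ of $W$ adapted to the pair $(M, N)$: thus $e_1, \dots, e_k$ is a basis of $M \cap N$, the vectors $e_1, \dots, e_m$ form a basis of $M$ for some $m$ with $k \le m \le n$, and the vectors $e_1, \dots, e_k, e_{m+1}, \dots, e_n$ form a basis of $N$. Such a basis exists by elementary linear algebra, since $W = (M \cap N) \oplus U \oplus U'$ for complements $U \subseteq M$ and $U' \subseteq N$ of $M \cap N$. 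I shall use the standard fact that, for any basis $e_1, \dots, e_n$ of a finite-dimensional quadratic space $W$, the products $e_I := e_{i_1} \cdots e_{i_r}$ with $I = \{ i_1 < \dots < i_r \}$ running over all subsets of $\{ 1, \dots, n \}$ (and $e_{\emptyset} := \mathbf{1}$) form a complex basis of $C(W)$ --- equivalently, the associated graded algebra of the natural filtration of $C(W)$ is the exterior algebra $\Lambda W$. Using the Clifford relations to reorder and contract products, one checks that $C(M)$ is spanned by those $e_I$ with $I \subseteq \{ 1, \dots, m \}$, that $C(N)$ is spanned by those $e_I$ with $I \subseteq \{ 1, \dots, k \} \cup \{ m+1, \dots, n \}$, and that $C(M \cap N)$ is spanned by those $e_I$ with $I \subseteq \{ 1, \dots, k \}$. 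Since distinct $e_I$ are linearly independent, intersecting the first two spans leaves exactly the span of the $e_I$ with $I \subseteq \{ 1, \dots, m \} \cap (\{ 1, \dots, k \} \cup \{ m+1, \dots, n \}) = \{ 1, \dots, k \}$, namely $C(M \cap N)$.

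The one genuine subtlety worth flagging is this: because the complements $U$ and $U'$ above need not be mutually orthogonal, there is in general no orthonormal basis of $W$ adapted simultaneously to $M$ and to $N$, so one cannot here invoke the familiar orthonormal monomial basis of $C(W)$. The remedy is the observation that the monomials over an arbitrary basis of $W$ already form a basis of $C(W)$; it is this more flexible version of the monomial-basis theorem that makes the adapted basis available. Everything else --- the linear algebra producing the adapted basis, the bookkeeping identifying $C(M)$, $C(N)$ and $C(M \cap N)$ as spans of monomial subfamilies, and the minimal-dimension extraction of $M_a$ --- is routine.
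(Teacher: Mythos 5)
Your proof is correct, but it takes a genuinely different route from the paper's. The paper defines $M_a$ outright as the intersection of the entire (possibly infinite) collection $\mathcal{F}_a(V) = \{ M \in \FF : a \in C(M) \}$ and then cites Theorem 1.3 of [2], which supplies the identity $\bigcap \{ C(M) : M \in \mathcal{F}_a(V) \} = C\bigl( \bigcap \{ M : M \in \mathcal{F}_a(V) \} \bigr)$ for the whole family at once; the proof is two lines long because all the real work is outsourced to that reference. You instead prove only the \emph{binary} identity $C(M) \cap C(N) = C(M \cap N)$, and you prove it from scratch via an adapted basis of $M + N$ together with the monomial-basis theorem for Clifford algebras over an arbitrary (not necessarily orthogonal) basis --- correctly flagging that orthonormal adapted bases need not exist, which is exactly the point that makes the flexible version of the monomial theorem necessary. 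Your minimal-dimension extraction of $M_a$ from the intersection-closed family $\{ M \cap M_1 : M \in \mathcal{F}_a \}$ then neatly sidesteps any appeal to intersections of infinite families: finite-dimensionality of $M_1$ does the work that the arbitrary-intersection theorem does in the paper. The trade-off is clear: the paper's argument is shorter and leans on a stronger external result, while yours is self-contained (modulo the standard monomial-basis fact) and needs only the pairwise case. Both are valid; yours would be the preferable exposition for a reader without access to [2].
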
 

\begin{proof} 
Let $\mathcal{F}_a(V)$ be the collection of all $M \in \FF$ such that $a \in C(M)$. This collection being nonempty, we define $M_a$ to be its intersection:  
$$M_a = \bigcap \mathcal{F}_a(V) \in \mathcal{F}_a(V).$$
Theorem 1.3 of [2] justifies the middle step of the conclusion 
$$a \in \bigcap \{ C(M) : M \in \mathcal{F}_a(V) \} = C( \bigcap \{ M : M \in \mathcal{F}_a(V) \}) = C(M_a).$$
\end{proof} 

\medbreak 

Now, let $\phi \in C'(V)$. To each $M \in \FF$ there is associated a unique $\phi_M \in C(M)$ such that 
$$\phi |_{C(M)} = \langle \cdot | \phi_M \rangle.$$
We claim that these finite-dimensional approximants converge weakly to $\phi$ as $M$ runs through the directed set $\FF$. 

\medbreak 

\begin{theorem} \label{net} 
If $\phi \in C'(V)$ then the net $( \Theta_{\phi_M} : M \in \FF )$ is pointwise convergent  to $\phi$. 

\end{theorem}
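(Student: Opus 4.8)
The plan is to show that the asserted convergence is of the simplest possible kind: for each fixed $c \in C(V)$ the net of complex numbers $(\Theta_{\phi_M}(c) : M \in \FF)$ is \emph{eventually constant} with value $\phi(c)$. A net in $\mathbb{C}$ that is eventually constant certainly converges to that constant, and pointwise convergence of $(\Theta_{\phi_M})$ to $\phi$ is precisely the statement that this happens at every $c$; so this reduction disposes of the theorem.

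To carry it out, fix $c \in C(V)$ and apply Theorem \ref{M_a} to obtain the smallest subspace $M_c \in \FF$ with $c \in C(M_c)$. The role of $M_c$ is to mark the start of a tail of the directed set $\FF$ on which the evaluation is transparent: if $M \in \FF$ and $M_c \subseteq M$, then $C(M_c) \subseteq C(M)$, so $c \in C(M)$, and the defining property $\phi|_{C(M)} = \langle \cdot | \phi_M \rangle$ of the approximant $\phi_M$ may be applied to $c$ itself, yielding
$$\Theta_{\phi_M}(c) = \langle c | \phi_M \rangle = \phi(c).$$
Since $\FF$ is directed by inclusion, $\{ M \in \FF : M_c \subseteq M \}$ is a tail of $\FF$; hence the net $(\Theta_{\phi_M}(c))_M$ is eventually equal to $\phi(c)$ and therefore converges to $\phi(c)$. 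As $c \in C(V)$ was arbitrary, the net $(\Theta_{\phi_M} : M \in \FF)$ converges to $\phi$ in the topology of pointwise convergence.

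I do not expect a genuine obstacle here. The one point deserving attention is the transition from ``$\phi_M$ represents $\phi$ throughout $C(M)$'' to ``$\phi_M$ represents $\phi$ at the particular element $c$'', which relies on $c$ actually lying in $C(M)$; this is exactly where Theorem \ref{M_a} is indispensable, since it guarantees a cofinal family of $M$ with $c \in C(M)$. (It is also worth keeping in mind why each $\phi_M$ exists and is unique: the restriction of $\langle \bullet | \bullet \rangle$ to the finite-dimensional space $C(M)$ is an inner product, so Riesz representation applies — as already recorded in the paragraph preceding the statement.)
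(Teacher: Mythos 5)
Your proof is correct and follows exactly the paper's own argument: fix $c$, invoke Theorem \ref{M_a} to get $M_c$, and observe that for all $M \supseteq M_c$ the value $\Theta_{\phi_M}(c) = \langle c | \phi_M \rangle = \phi(c)$ is constant, so the net is eventually constant and converges. Nothing further is needed.
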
 

\begin{proof} 
Let $a \in C(V)$: as soon as $M \in \FF$ contains $M_a$ it follows that 
$$\Theta_{\phi_M} (a) = \langle a | \phi_M \rangle = \phi (a);$$
the net $( \Theta_{\phi_M} (a) : M \in \FF )$ is thus eventually constant with value $\phi (a)$ and so convergent to $\phi (a)$ as claimed. 
\end{proof} 

\medbreak 

We may express this result as  
$$\phi = \lim_{M \uparrow \FF} \Theta_{\phi_M}$$
though for convenience we may abuse notation and write simply $\phi_M \ra \phi$ as $M \uparrow \FF$. 

\medbreak 

\begin{theorem} \label{prop}
If $ \phi \in C'(V)$ and $M \in \FF$ then $(\phi^*)_M = (\phi_M)^*$ and $\gamma(\phi)_M = \gamma (\phi_M)$. 
\end{theorem}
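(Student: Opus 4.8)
The plan is to verify each identity by checking that the proposed Clifford-algebra element induces the correct antilinear functional on $C(M)$, then invoke the uniqueness built into the definition of $\phi_M$. Recall that $\phi_M \in C(M)$ is characterized by $\phi|_{C(M)} = \langle \cdot \,|\, \phi_M\rangle$; since $(\bullet|\bullet)$ restricted to $C(M)$ is a genuine inner product (non-degenerate), any element of $C(M)$ representing $\phi|_{C(M)}$ in this way is unique. So for the first identity it suffices to show $\phi^*|_{C(M)} = \langle \cdot \,|\, (\phi_M)^*\rangle$, and for the second that $\gamma(\phi)|_{C(M)} = \langle \cdot \,|\, \gamma(\phi_M)\rangle$.

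For the involution identity, I would take $a \in C(M)$ and compute $\phi^*(a) = \overline{\phi(a^*)}$; since $a^* \in C(M)$ as well (the subalgebra $C(M)$ is $*$-closed), this equals $\overline{\langle a^* \,|\, \phi_M\rangle}$. Using the already-noted symmetry $\langle x^* \,|\, y\rangle = \langle y^* \,|\, x\rangle$ of the inner product, $\overline{\langle a^* \,|\, \phi_M\rangle} = \overline{\langle (\phi_M)^* \,|\, a\rangle} = \langle a \,|\, (\phi_M)^*\rangle$ (conjugating swaps the two slots). Hence $\phi^*|_{C(M)} = \langle \cdot \,|\, (\phi_M)^*\rangle$, and since $(\phi_M)^* \in C(M)$, uniqueness gives $(\phi^*)_M = (\phi_M)^*$.

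For the grading identity, take $a \in C(M)$ and compute $\gamma(\phi)(a) = \phi(\gamma(a))$; because $\gamma$ preserves each $C(M)$ (it sends $M$ to itself, hence $C(M)$ to itself), $\gamma(a) \in C(M)$, so this equals $\langle \gamma(a) \,|\, \phi_M\rangle$. Since $\gamma$ is a $*$-automorphism preserving the trace, it is unitary for $\langle\bullet|\bullet\rangle$ — indeed $\langle \gamma(x)\,|\,\gamma(y)\rangle = \tau(\gamma(x)^*\gamma(y)) = \tau(\gamma(x^*y)) = \tau(x^*y) = \langle x\,|\,y\rangle$ — so $\langle \gamma(a)\,|\,\phi_M\rangle = \langle a\,|\,\gamma^{-1}(\phi_M)\rangle = \langle a\,|\,\gamma(\phi_M)\rangle$, using $\gamma^2 = I$. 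Thus $\gamma(\phi)|_{C(M)} = \langle \cdot \,|\, \gamma(\phi_M)\rangle$ with $\gamma(\phi_M) \in C(M)$, and uniqueness yields $\gamma(\phi)_M = \gamma(\phi_M)$.

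There is no serious obstacle here; the only points needing care are the structural facts that $C(M)$ is closed under both $^*$ and $\gamma$ (immediate, since $M^* = M$ and $\gamma(M) = M$) and that $\gamma$ is unitary for the canonical inner product (a one-line trace computation as above). The whole argument is a matter of chasing the definitions and using non-degeneracy of $\langle\bullet|\bullet\rangle$ on the finite-dimensional subalgebra $C(M)$ to conclude uniqueness of the representing element.
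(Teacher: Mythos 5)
Your proof is correct and follows essentially the same route as the paper's: evaluate $\phi^*$ and $\gamma(\phi)$ on an arbitrary element of $C(M)$, use that $C(M)$ is closed under $^*$ and $\gamma$ together with the identities $\langle a^*|b\rangle=\langle b^*|a\rangle$ and $\gamma$-invariance of the inner product, and conclude by uniqueness of the representing element. The paper leaves the uniqueness/non-degeneracy step implicit, but otherwise the computations are identical.
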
 

\begin{proof} 
Direct calculation. Let $c \in C(M)$: on the one hand $c^* \in C(M)$ so 
$$\langle c | (\phi^*)_M \rangle = \phi^* (c) = \overline{ \phi (c^*)} = \overline{\langle c^* | \phi_M \rangle} = \langle c | (\phi_M)^* \rangle;$$
on the other hand $\gamma(c) \in C(M)$ so 
$$\langle c | \gamma (\phi)_M \rangle = \gamma (\phi) (c) = \phi (\gamma(c)) = \langle \gamma(c) | \phi_M \rangle = \langle c | \gamma (\phi_M) \rangle.$$
\end{proof} 

\medbreak 

Similarly direct calculations show that $\tau (\phi) = \tau (\phi_M)$ and that if $a \in C(M)$ then $(\Theta_a)_M = a$. 

\medbreak 

\section*{A Partial Product}

\medbreak 

The considerations with which we closed the previous section lead us to consider the following definition of a partial product in the antidual. 

\medbreak 

Let $\phi, \psi \in C'(V)$. If $c \in C(V)$ is arbitrary then we define $(\phi \psi) (c)$ by the rule 
$$(\phi \psi) (c) = \lim_{M \uparrow \FF} \psi (\phi_M^* \: c)$$
or 
$$(\phi \psi) (c) = \lim_{M \uparrow \FF} \phi (c \: \psi_M^*)$$
{\it provided these limits exist}. Notice that if one of these limits exists then so does the other and the two coincide: in fact, as soon as $M \in \FF$ contains $M_c$ (equivalently, $C(M)$ contains $c$) it follows that 
$$\psi (\phi_M^* \: c) = \langle \phi_M^* \: c | \psi_M \rangle = \langle c | \phi_M  \psi_M \rangle = \langle c \: \psi_M^* | \phi_M \rangle = \phi (c \: \psi_M^*);$$
consequently, it is also the case that 
$$(\phi \psi) (c) = \lim_{M \uparrow \FF} \langle c | \phi_M  \psi_M \rangle.$$

\medbreak 

We may view this rule as defining a partial antifunctional $\phi \psi$ on $C(V)$: the set of all $c$ in $C(V)$ for which the displayed limits exist is plainly a subspace of $C(V)$ on which $\phi \psi$ is linear. However, we prefer to regard this rule as defining a partial product in $C'(V)$: thus, we define $\phi \psi : C(V) \ra \mathbb{C}$ by requiring that 
$$(\phi \psi) (c) = \lim_{M \uparrow \FF} \psi (\phi_M^* \: c) = \lim_{M \uparrow \FF} \phi (c \: \psi_M^*)$$
provided that these limits exist for each $c \in C(V)$. With this understanding, it is evident that $\phi \psi$ is an antilinear functional on $C(V)$; so $\phi \psi \in C'(V)$ when defined. We shall find it convenient to express the statement that the product $\phi \psi$ is defined in either of the equivalent forms $\psi \in D (\phi , -)$ and $\phi \in D( - , \psi)$. The subsets $D (\phi , -)$ and $D( - , \psi)$ of $C'(V)$ so defined are of course subspaces. 

\medbreak 

The linear embedding $\Theta : C(V) \ra C'(V)$ is multiplicative. 

\medbreak 

\begin{theorem} \label{mult}
If $a, b \in C(V)$ then $\Theta_a \Theta_b$ is defined and equals $\Theta_{a b}$. 
\end{theorem}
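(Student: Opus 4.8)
The plan is to show that, for each fixed $c \in C(V)$, the net defining $(\Theta_a \Theta_b)(c)$ is eventually constant, and then simply read off its limiting value. The only ingredient beyond the definition of the partial product is the observation recorded just before this theorem: if $a \in C(M)$ then $(\Theta_a)_M = a$. Combining this with Theorem~\ref{M_a}, it follows that $(\Theta_a)_M = a$ as soon as $M \in \FF$ contains $M_a$.

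Concretely, I would fix $c \in C(V)$ and take any $M \in \FF$ large enough to contain both $M_a$ and $M_c$, so that $a \in C(M)$ and $c \in C(M)$. For such $M$ we have $(\Theta_a)_M^* = a^*$, hence
$$\Theta_b((\Theta_a)_M^* \, c) = \Theta_b(a^* c) = \langle a^* c \mid b \rangle = \langle c \mid a b \rangle = \Theta_{ab}(c),$$
where the penultimate equality is the identity $\langle a^* c \mid b \rangle = \langle c \mid ab \rangle$ noted in the opening section and the last is the definition of $\Theta_{ab}$. Thus the net $(\Theta_b((\Theta_a)_M^* \, c) : M \in \FF)$ is eventually constant with value $\Theta_{ab}(c)$, so it converges to $\Theta_{ab}(c)$. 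Since $c$ was arbitrary, the limit defining $\Theta_a \Theta_b$ exists at every point of $C(V)$, which is to say $\Theta_b \in D(\Theta_a, -)$, and $\Theta_a \Theta_b = \Theta_{ab}$.

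I do not expect any real obstacle: the partial product was set up precisely so as to extend $\Theta$, and the stabilization of the finite-dimensional approximants furnished by Theorem~\ref{M_a} does all the work. For symmetry one could equally run the computation through the other form of the limit, using $(\Theta_b)_M = b$ for $M \supseteq M_b$ and $\Theta_a(c \, (\Theta_b)_M^*) = \Theta_a(c \, b^*) = \langle c b^* \mid a \rangle = \langle c \mid ab \rangle$; since the two forms are already known to agree, either route suffices.
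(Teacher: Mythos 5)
Your proof is correct and follows essentially the same route as the paper: both show the net $(\Theta_b((\Theta_a)_M^*\,c))_M$ is eventually constant by using $(\Theta_a)_M = a$ once $M$ contains $a$, together with the identity $\langle a^*c \mid b\rangle = \langle c \mid ab\rangle$. (Your additional requirement that $M$ contain $M_c$ is harmless but unnecessary; the paper only needs $M \supseteq M_a$.)
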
 

\begin{proof} 
Let $c \in C(V)$ be arbitrary: if $M \in \FF$ contains $a$ then Theorem \ref{prop} justifies 
$$\Theta_b ((\Theta_a)_M^* \: c) = \Theta_b (a^* c) = \langle a^* c | b \rangle = \langle c | a b \rangle$$ 
so that 
$$\lim_{M \uparrow \FF} \Theta_b ((\Theta_a)_M^* \: c) =  \langle c | a b \rangle = \Theta_{a b} (c).$$ 
\end{proof} 

\medbreak 

The partial product relates correctly to the involution. 

\medbreak 

\begin{theorem} \label{invol}
Let $\phi, \psi \in C'(V)$. If $\psi \in D( \phi, - )$ then $\psi^* \in D( -, \phi^* )$ and $\psi^* \phi^* = (\phi \psi)^*$. 
\end{theorem}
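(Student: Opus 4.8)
The plan is to work directly from the definition of the partial product and the compatibility results already established, reducing everything to the $C(V)$-level identity $\langle c \mid ab\rangle^- $-type manipulations. Recall that $\psi \in D(\phi,-)$ means precisely that for every $c \in C(V)$ the limit $\lim_{M\uparrow\FF}\langle c \mid \phi_M\psi_M\rangle$ exists and defines $(\phi\psi)(c)$. I want to show that for every $c$ the limit $\lim_{M\uparrow\FF}\langle c \mid \psi_M^*\phi_M^*\rangle$ also exists, which is what $\psi^* \in D(-,\phi^*)$ asks for, and that it equals $(\phi\psi)^*(c) = \overline{(\phi\psi)(c^*)}$.

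First I would record the two ingredients. By Theorem \ref{prop}, $(\psi^*)_M = (\psi_M)^*$ and $(\phi^*)_M = (\phi_M)^*$ for every $M \in \FF$. Second, I would use the elementary identity in $C(V)$ that $\langle x \mid y \rangle = \overline{\langle y^* \mid x^* \rangle}$, or more conveniently the consequence that for any finite-dimensional $M$ and any $c$, $\langle c \mid \psi_M^*\phi_M^*\rangle = \langle c \mid (\phi_M\psi_M)^*\rangle = \overline{\langle (\phi_M\psi_M) \mid c^*\rangle} = \overline{\langle c^* \mid \phi_M\psi_M\rangle}$, where the last step uses the Hermitian symmetry of the inner product together with $(\phi_M\psi_M)^{**} = \phi_M\psi_M$. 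Putting these together, for the product $\psi^*\phi^*$ computed via the approximants of $\psi^*$ and $\phi^*$ we get
$$\langle c \mid (\psi^*)_M(\phi^*)_M\rangle = \langle c \mid (\psi_M)^*(\phi_M)^*\rangle = \overline{\langle c^* \mid \phi_M\psi_M\rangle}.$$

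Now I would take the limit as $M\uparrow\FF$. The right-hand side is $\overline{\langle c^* \mid \phi_M\psi_M\rangle}$, and since $\psi \in D(\phi,-)$ we know $\lim_{M\uparrow\FF}\langle c^* \mid \phi_M\psi_M\rangle = (\phi\psi)(c^*)$ exists; complex conjugation is continuous, so the left-hand limit exists and equals $\overline{(\phi\psi)(c^*)} = (\phi\psi)^*(c)$. Since this holds for every $c \in C(V)$, the product $\psi^*\phi^*$ is defined — that is, $\phi^* \in D(\psi^*,-)$, equivalently $\psi^* \in D(-,\phi^*)$ — and equals $(\phi\psi)^*$, as required.

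I do not expect a serious obstacle here; the only point needing a moment's care is making sure the bookkeeping of which approximant goes with which factor is consistent with the two equivalent forms of the definition of the partial product (the ``$\phi_M^*\,c$'' form versus the ``$c\,\psi_M^*$'' form), so that the domain condition $\psi^*\in D(-,\phi^*)$ is read off correctly. Concretely, $\psi^*\phi^*$ being defined should be computed using $(\psi^*)_M^*\,c = \psi_M\,c$ inside $\phi^*$, i.e. via $\phi^*(\psi_M c) = \langle \psi_M c \mid \phi_M^*\rangle = \langle c \mid \psi_M^*\phi_M^*\rangle$, which matches the display above; everything else is the continuity of conjugation and an appeal to Theorem \ref{prop}.
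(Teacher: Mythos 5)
Your argument is correct and takes essentially the same route as the paper: both reduce $(\psi^*\phi^*)(c)$ to $\overline{(\phi\psi)(c^*)}$ via Theorem \ref{prop}, the involution, and continuity of conjugation, with you merely phrasing the computation through the equivalent inner-product form $\langle c \mid \phi_M\psi_M\rangle$ rather than directly through $\phi^*((\psi^*)_M^*\,c)=\overline{\phi(c^*\psi_M^*)}$. One small blemish: your stated identity $\langle x\mid y\rangle=\overline{\langle y^*\mid x^*\rangle}$ and the middle link of your chain each carry a spurious conjugation (the correct links are $\langle c\mid(\phi_M\psi_M)^*\rangle=\langle \phi_M\psi_M\mid c^*\rangle=\overline{\langle c^*\mid\phi_M\psi_M\rangle}$), but the two slips cancel, so the endpoints of the chain — and hence the proof — are right.
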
 

\begin{proof} 
Let $\psi \in D( -, \phi)$ and $c \in C(V)$. If $M \in \FF$ then Theorem \ref{prop} justifies 
$$\phi^* ((\psi^*)_M^* \: c) = \phi^* (\psi_M  c) = \overline{\phi ((\psi_M c)^*)} = \overline{\phi (c^* \psi_M^* )}.$$
Here, let $M \uparrow \FF$: it follows that $\phi (c^* \psi_M^* ) \ra (\phi \psi) (c^*)$ and therefore that $\phi^* ((\psi^*)_M^* \: c)$ converges to $\overline{ (\phi \psi ) (c^*)} = (\phi \psi )^* (c)$. 
\end{proof} 

\medbreak 

The partial product relates correctly to the grading. 

\medbreak 

\begin{theorem} \label{grad} 
Let $\phi, \psi \in C'(V)$. If $\psi \in D( \phi, - )$ then $\gamma (\psi) \in D( \gamma (\phi), - )$ and $\gamma(\phi) \gamma(\psi) = \gamma (\phi \psi)$. 
\end{theorem}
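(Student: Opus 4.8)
The plan is to mimic the proof of Theorem \ref{invol}, pushing the grading automorphism through the limit that defines the partial product. The starting point is Theorem \ref{prop}, which gives $\gamma(\phi)_M = \gamma(\phi_M)$ for each $M \in \FF$. Combined with the fact that the grading automorphism of $C(V)$ commutes with the involution — this holds on generators, since $\gamma(v)^* = (-v)^* = -v = \gamma(v^*)$ for $v \in V$, hence everywhere by uniqueness/multiplicativity — this yields the key identity $(\gamma(\phi))_M^{\,*} = \gamma(\phi_M^{\,*})$.

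Next I would fix $c \in C(V)$ and rewrite the $M$-th term of the net that defines $(\gamma(\phi)\,\gamma(\psi))(c)$. Using the identity just obtained, the definition $\gamma(\psi)(x) = \psi(\gamma(x))$, the fact that $\gamma$ is an algebra automorphism, and $\gamma^2 = I$, one computes
$$\gamma(\psi)\big((\gamma(\phi))_M^{\,*}\, c\big) = \psi\big(\gamma(\gamma(\phi_M^{\,*})\,c)\big) = \psi\big(\gamma^2(\phi_M^{\,*})\,\gamma(c)\big) = \psi\big(\phi_M^{\,*}\,\gamma(c)\big).$$

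Finally, let $M \uparrow \FF$. Since by hypothesis $\psi \in D(\phi, -)$, the net $\big(\psi(\phi_M^{\,*}\,\gamma(c)) : M \in \FF\big)$ converges, with limit $(\phi\psi)(\gamma(c)) = \gamma(\phi\psi)(c)$. Hence the net defining $(\gamma(\phi)\,\gamma(\psi))(c)$ converges for every $c \in C(V)$, which is precisely the assertion $\gamma(\psi) \in D(\gamma(\phi), -)$; and the common value is $\gamma(\phi\psi)(c)$, so $\gamma(\phi)\,\gamma(\psi) = \gamma(\phi\psi)$, as desired.

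I do not anticipate a genuine obstacle here: the proof is entirely formal, and the only thing requiring care is the bookkeeping of the several elementary properties of $\gamma$ — multiplicativity, $\gamma^2 = I$, commutation with $^*$, and Theorem \ref{prop} — and their correct order of application inside the limit. If one wished, the analogous statement $D(-, \psi) \to D(-, \gamma(\psi))$ follows either symmetrically or by combining this theorem with Theorem \ref{invol} and the compatibility of $\gamma$ with $^*$ on $C'(V)$.
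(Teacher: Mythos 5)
Your proof is correct and follows essentially the same route as the paper's: both apply Theorem \ref{prop} to get $\gamma(\phi)_M^* = \gamma(\phi_M^*)$, push $\gamma$ through the argument of $\psi$ using multiplicativity and $\gamma^2 = I$, and pass to the limit. Your write-up merely makes explicit the commutation of $\gamma$ with $^*$ on $C(V)$, which the paper leaves tacit.
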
 

\begin{proof} 
Let $\psi \in D( -, \phi)$ and $c \in C(V)$. As $M \uparrow \FF$  Theorem \ref{prop} justifies 
$$\gamma(\psi) (\gamma(\phi)_M^* \: c) = \gamma(\psi) (\gamma (\phi_M^*) \: c) = \psi (\phi_M^* \gamma(c)) \ra (\phi \psi) (\gamma (c)) = \gamma(\phi \psi) (c).$$
\end{proof} 

\medbreak 

Our partial product also captures the $C(V)$-bimodule structure on $C'(V)$. 

\medbreak 

\begin{theorem} \label{bimod}
If $a \in C(V)$ and $\psi \in C'(V)$ then $\Theta_a \in D( -, \psi)$ and $\Theta_a \psi = a \cdot \psi$. 
\end{theorem}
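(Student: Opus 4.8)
The plan is to reduce the limit defining the partial product to a net that is \emph{eventually constant}, using the fact noted just before this section that $(\Theta_a)_M = a$ as soon as $M \in \FF$ contains $M_a$. Concretely, fix $c \in C(V)$ arbitrarily. For every $M \in \FF$ with $M \supseteq M_a$ we have $(\Theta_a)_M = a$, hence $(\Theta_a)_M^* = a^*$, so that
$$\psi\big((\Theta_a)_M^* \, c\big) = \psi(a^* c) = (a \cdot \psi)(c)$$
by the very definition of the left module action. Thus the net $\big(\psi((\Theta_a)_M^* \, c) : M \in \FF\big)$ is eventually constant with value $(a \cdot \psi)(c)$, and in particular it converges to $(a \cdot \psi)(c)$.

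Since $c \in C(V)$ was arbitrary, the limit
$$(\Theta_a \psi)(c) = \lim_{M \uparrow \FF} \psi\big((\Theta_a)_M^* \, c\big)$$
exists for every $c$, which is precisely the statement that $\Theta_a \in D(-, \psi)$; and the computation above identifies its value as $(a \cdot \psi)(c)$, giving $\Theta_a \psi = a \cdot \psi$ as desired.

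I do not expect any genuine obstacle here: the argument is a one-line application of the eventual-constancy phenomenon already used to prove Theorems \ref{net} and \ref{mult}. The only points needing a word of justification are the appeal to $(\Theta_a)_M = a$ for $M \supseteq M_a$ (from the remarks following Theorem \ref{prop}) and the observation, immediate from Theorem \ref{M_a}, that $\{M \in \FF : M \supseteq M_a\}$ is a terminal segment of $\FF$ so that "eventually" makes sense. If one prefers, the same conclusion follows from the third equivalent form $(\Theta_a \psi)(c) = \lim_{M} \langle c \mid (\Theta_a)_M \, \psi_M \rangle = \lim_M \langle c \mid a \, \psi_M \rangle = \lim_M \langle a^* c \mid \psi_M \rangle$, which is eventually $\psi(a^* c)$ once $M \supseteq M_{a^* c}$; but the route above is cleaner.
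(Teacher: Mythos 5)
Your proof is correct and follows essentially the same route as the paper: once $M$ contains $M_a$ one has $(\Theta_a)_M = a$ (the remark after Theorem \ref{prop}), so the net $\psi((\Theta_a)_M^*\,c)$ is eventually constant with value $\psi(a^*c) = (a\cdot\psi)(c)$. Nothing is missing.
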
 

\begin{proof} 
Let $c \in C(V)$ be arbitrary: the net $(\psi ((\Theta_a)_M^* \: c) : M \in \FF)$ is eventually constant, for as soon as $M \in \FF$ contains $a$ it follows by Theorem \ref{prop} that 
$$\psi ((\Theta_a)_M^* \: c) = \psi( a^* c) = (a \cdot \psi) (c).$$
\end{proof} 

\medbreak 

Of course, it is similarly true that the product $\phi \Theta_b$ is defined and equals $\phi \cdot b$. 

\medbreak 

The question to what extent our partial product is associative is a little more involved. We offer here two positive answers to this question; our first is as follows. 

\medbreak 

\begin{theorem} \label{submod}
If $\psi \in C'(V)$ then $D (-, \psi)$ is a left $C(V)$-submodule: if $\phi \in D(-, \psi)$ and $a \in C(V)$ then $a \cdot \phi \in D(-, \psi)$ and $(a \cdot \phi) \psi = a \cdot (\phi \psi)$. 
\end{theorem}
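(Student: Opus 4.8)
The plan is to work directly from the limit-definition of the partial product, using the finite-dimensional approximants of $a \cdot \phi$. First I would identify these approximants: for $a \in C(V)$ and $\phi \in C'(V)$, I claim that if $M \in \FF$ contains $M_a$ then $(a \cdot \phi)_M = a \, \phi_M$. Indeed, for $c \in C(M)$ we have $a^* c \in C(M)$ (since $a \in C(M_a) \subseteq C(M)$), so $(a \cdot \phi)(c) = \phi(a^* c) = \langle a^* c \mid \phi_M \rangle = \langle c \mid a\,\phi_M\rangle$, which by uniqueness of the approximant gives $(a \cdot \phi)_M = a\,\phi_M$. This is the small technical lemma that powers everything; I'd state and verify it first, perhaps inline.

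Next I would fix $\phi \in D(-,\psi)$, $a \in C(V)$, and an arbitrary $c \in C(V)$, and compute $\big((a \cdot \phi)\,\psi\big)(c)$ via the second form of the defining limit, namely $\lim_{M \uparrow \FF} (a \cdot \phi)\big(c\,\psi_M^*\big)$. For $M$ eventually containing $M_a$, this equals $\lim_{M \uparrow \FF} \phi\big(a^* c\,\psi_M^*\big)$ by the definition of $a \cdot \phi$. Now $a^* c \in C(V)$ is a fixed element, so by hypothesis $\phi \in D(-,\psi)$ the net $\phi\big((a^*c)\,\psi_M^*\big)$ converges, with limit $(\phi\psi)(a^* c)$. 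Hence the limit defining $(a \cdot \phi)\psi$ exists at $c$, which shows $a \cdot \phi \in D(-,\psi)$, and its value is $(\phi\psi)(a^* c) = \big(a \cdot (\phi\psi)\big)(c)$ by the definition of the left module action. Since $c$ was arbitrary, $(a \cdot \phi)\psi = a \cdot (\phi\psi)$.

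I expect the only real point requiring care is the approximant lemma $(a \cdot \phi)_M = a\,\phi_M$ for $M \supseteq M_a$ — specifically making sure the containment $a^* c \in C(M)$ is used correctly (it needs both $a \in C(M)$, hence $a^* \in C(M)$, and $c \in C(M)$) and that uniqueness of the approximant closes the argument. Everything after that is a two-line chase through the limit definitions, entirely parallel in spirit to the proofs of Theorems \ref{invol} and \ref{grad}; in particular no genuine interchange of limits is needed, because $a$ is a fixed element of $C(V)$ and the net index $M$ only interacts with $\psi_M$. One remark worth including: the second form of the defining limit is the convenient one here, since it isolates the dependence on $a$ into the fixed argument $a^* c$ of $\phi$; attempting the first form $\lim_M \psi\big((a\cdot\phi)_M^* \, c\big)$ would require taking adjoints of $a\,\phi_M$ and is messier though ultimately equivalent.
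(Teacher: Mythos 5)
Your proposal is correct and follows essentially the same route as the paper, which states the same approximant identity $(a \cdot \phi)_M = a\,\phi_M$ and then computes $\psi((a\cdot\phi)_M^*\,c) = \psi(\phi_M^*\,a^*c) \to (\phi\psi)(a^*c) = (a\cdot(\phi\psi))(c)$ using the first form of the defining limit. The one point worth noting is that your choice of the second form actually makes the approximant lemma superfluous: $(a\cdot\phi)(c\,\psi_M^*) = \phi(a^*c\,\psi_M^*)$ holds for \emph{every} $M$ directly from the definition of the module action (no restriction to $M \supseteq M_a$ is needed there), whereas the paper's first-form computation genuinely relies on $(a\cdot\phi)_M = a\,\phi_M$ and the adjoint step $(a\,\phi_M)^* = \phi_M^*\,a^*$.
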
 

\begin{proof} 
Note first that if $M \in \FF$ contains $a$ then $(a \cdot \phi)_M = a \phi_M$. Now let $c \in C(V)$ be arbitrary and calculate: as $M \uparrow \FF$ it follows that 
$$\psi ((a \cdot \phi)_M^* \: c) = \psi ((a \phi_M)^* \: c) = \psi (\phi_M^* a^* c) \ra (\phi \psi) (a^* c) = (a \cdot (\phi \psi)) (c).$$
\end{proof} 

\medbreak 

Similarly, $D( \phi, -)$ is a right $C(V)$-submodule and $\phi (\psi \cdot b) = (\phi \psi) \cdot b$. 

\medbreak 

Our second positive answer pertains to products in which an element of the Clifford algebra lies between two elements of the antidual. 

\medbreak 

\begin{theorem} \label{mid}
If $\phi, \psi \in C'(V)$ and $d \in C(V)$ then the conditions $\phi \cdot d \in D( - , \psi)$ and $d \cdot \psi \in D( \phi, -)$ are equivalent and imply $(\phi \cdot d) \psi = \phi (d \cdot \psi).$
\end{theorem}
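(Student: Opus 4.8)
The plan is to unwind both conditions into statements about the nets of finite-dimensional approximants and show they literally coincide. First I would record the approximants of the one-sided bimodule actions: for $M \in \FF$ large enough to contain $d$ (i.e.\ $C(M) \ni d$), Theorem~\ref{prop} together with the observation already used in Theorem~\ref{submod} gives $(\phi \cdot d)_M = \phi_M \cdot d = \phi_M d$ and, symmetrically, $(d \cdot \psi)_M = d \psi_M$. This is the key computational input; everything else is bookkeeping with the definition of the partial product.

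Next I would fix an arbitrary $c \in C(V)$ and write down the two defining nets. For $\phi \cdot d \in D(-,\psi)$ we need $\lim_{M \uparrow \FF} \psi\big((\phi \cdot d)_M^* \, c\big)$ to exist; once $M$ contains $d$ this net equals $\psi\big((\phi_M d)^* c\big) = \psi\big(d^* \phi_M^* c\big)$. For $d \cdot \psi \in D(\phi, -)$ we need $\lim_{M \uparrow \FF} \phi\big(c \, (d \cdot \psi)_M^*\big)$ to exist; once $M$ contains $d$ this net equals $\phi\big(c (d \psi_M)^*\big) = \phi\big(c \psi_M^* d^*\big)$. Now I invoke the identity that is built into the very definition of the partial product — namely that for $M \supseteq M_{c'}$ one has $\psi(\phi_M^* c') = \phi(c' \psi_M^*)$ for any fixed $c' \in C(V)$ — applied with $c' = d^* c$ on the one side: for $M$ large, $\psi(\phi_M^* (d^* c)) = \phi((d^* c)\psi_M^*)$. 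Hence the tail of the first net, $\psi(d^* \phi_M^* c)$, is \emph{not} immediately of this form, so the honest route is to observe instead that $\psi\big(d^* \phi_M^* c\big) = \langle d^* \phi_M^* c \mid \psi_M\rangle = \langle c \mid \phi_M d \psi_M\rangle$ and likewise $\phi\big(c \psi_M^* d^*\big) = \langle c \psi_M^* d^* \mid \phi_M\rangle = \langle c \mid \phi_M d \psi_M\rangle$, using the three-term identity $\langle a^* c \mid b\rangle = \langle c \mid ab\rangle = \langle c b^* \mid a\rangle$ from the first section. So the two nets are termwise equal (for all large $M$), which makes the existence of one limit equivalent to the existence of the other and forces the limits to agree.

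Finally I would identify that common limit with both $(\phi \cdot d)\psi$ and $\phi(d \cdot \psi)$: by definition $(\phi \cdot d)\psi$ evaluated at $c$ is exactly $\lim_M \psi\big((\phi\cdot d)_M^* c\big)$, which we have just shown equals $\lim_M \langle c \mid \phi_M d \psi_M\rangle$; and $\phi(d \cdot \psi)$ evaluated at $c$ is $\lim_M \phi\big(c (d\cdot\psi)_M^*\big)$, which equals the same thing. Since $c$ was arbitrary, the two antifunctionals coincide.

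The only place demanding any care — the ``main obstacle'' such as it is — is making sure that the reduction to the single net $\big(\langle c \mid \phi_M d \psi_M\rangle : M \in \FF\big)$ is valid \emph{eventually} rather than for all $M$: the equalities $(\phi\cdot d)_M = \phi_M d$ and $(d\cdot\psi)_M = d\psi_M$ hold only once $M \supseteq M_d$, and one should also take $M \supseteq M_c$ to land inside $C(M)$ when expanding the inner products. Since $\FF$ is directed and $M_c \cup M_d$ (the subspace generated) lies in $\FF$, this is automatic, but it is worth stating explicitly so the ``eventually constant/equal'' argument is clean. No genuine analytic subtlety arises because, as throughout this section, ``convergence'' of these nets is convergence of a net that is eventually of a fixed algebraic form.
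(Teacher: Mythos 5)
Your proof is correct and follows essentially the same route as the paper: both rest on the identity $(\phi\cdot d)_M = \phi_M d$ (valid once $C(M)$ contains $d$) and the eventual termwise equality of the defining nets. The paper is marginally more direct — it simply observes $(d\cdot\psi)(\phi_M^*\, c) = \psi(d^*\phi_M^*\, c) = \psi((\phi\cdot d)_M^*\, c)$ and so never needs $(d\cdot\psi)_M$ or the symmetric middle expression $\langle c \mid \phi_M d\, \psi_M\rangle$ — but this is a cosmetic difference.
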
 

\begin{proof} 
By symmetry, we need only tackle one direction of the equivalence. Let $\phi \cdot d \in D( - , \psi)$: as in the proof of Theorem \ref{submod}, if $M \in \FF$ contains $d$ then $(\phi \cdot d)_M = \phi_M d$ so that if $c \in C(V)$ is arbitrary then passage to the limit as $M \uparrow \FF$ has the effect 
$$ (d \cdot \psi) (\phi_M^* c) = \psi( d^* \phi_M^* c) = \psi ((\phi \cdot d)_M^* c) \ra ((\phi \cdot d) \psi) (c).$$
\end{proof} 

\medbreak 

As announced prior to the statement of these two theorems, they may be recast as extensions of the associative law beyond the Clifford algebra and into the antidual: Theorem \ref{bimod} permits us to write the identities of Theorem \ref{submod} as $(\Theta_a \phi) \psi = \Theta_a ( \phi \psi)$ and $\phi (\psi \Theta_b) = (\phi \psi) \Theta_b$ and the identity of Theorem \ref{mid} as $(\phi \Theta_d) \psi = \phi (\Theta_d \psi)$; similarly, the $C(V)$-bimodule identity itself may be rewritten as $\Theta_a (\chi \Theta_b) = (\theta_a \chi) \Theta_b$. This leaves open the extent to which $(\phi \chi) \psi = \phi (\chi \psi)$ is satisfied in general. 

\medbreak 

The partial product in $C'(V)$ is unital: in fact, the antitrace 
$$\bar{\tau} : C(V) \ra \mathbb{C}: c \mapsto \overline {\tau (c)} = \langle c | {\bf 1} \rangle $$
serves as multiplicative identity. 

\medbreak 

\begin{theorem} \label{id} 
If $\chi \in C'(V)$ then $\bar{\tau} \chi = \chi = \chi \bar{\tau}$. 
\end{theorem}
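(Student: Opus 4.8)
The plan is to notice that the antitrace is simply $\Theta_{\bf 1}$ and then read the statement off from the bimodule behaviour of the partial product established in Theorem \ref{bimod}. First I would record the identification $\bar{\tau} = \Theta_{\bf 1}$: for $c \in C(V)$ one has $\Theta_{\bf 1} (c) = \langle c | {\bf 1} \rangle = \tau (c^* {\bf 1}) = \overline{\tau (c)} = \bar{\tau} (c)$, which incidentally also makes plain that $\bar{\tau}$ really does lie in $C'(V)$, being antilinear in $c$.

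With this in hand, the left identity $\bar{\tau} \chi = \chi$ is an immediate instance of Theorem \ref{bimod}: taking $a = {\bf 1}$ there, the product $\bar{\tau} \chi = \Theta_{\bf 1} \chi$ is defined and equals ${\bf 1} \cdot \chi$, and since $({\bf 1} \cdot \chi)(c) = \chi ({\bf 1}^* c) = \chi (c)$ for every $c \in C(V)$ we conclude $\bar{\tau} \chi = \chi$. The right identity $\chi \bar{\tau} = \chi$ follows dually from the remark immediately after Theorem \ref{bimod} (that $\phi \Theta_b$ is defined and equals $\phi \cdot b$): taking $b = {\bf 1}$ gives $\chi \bar{\tau} = \chi \Theta_{\bf 1} = \chi \cdot {\bf 1}$, and $(\chi \cdot {\bf 1})(c) = \chi (c \, {\bf 1}^*) = \chi (c)$, whence $\chi \bar{\tau} = \chi$.

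Should one prefer to bypass Theorem \ref{bimod}, the same conclusion drops straight out of the definition of the partial product. Because ${\bf 1} \in C(M)$ for every $M \in \FF$, and because $(\Theta_a)_M = a$, every finite-dimensional approximant $(\bar{\tau})_M$ of $\bar{\tau} = \Theta_{\bf 1}$ equals ${\bf 1}$; hence for each $c$ the defining nets $\bigl( \chi ((\bar{\tau})_M^* \, c) \bigr)$ and $\bigl( \chi (c \, (\bar{\tau})_M^*) \bigr)$ are both constant with value $\chi (c)$, so $\bar{\tau} \chi$ and $\chi \bar{\tau}$ are both defined and equal $\chi$. I do not anticipate any genuine obstacle here; the only point that deserves a moment's care is that the products $\bar{\tau} \chi$ and $\chi \bar{\tau}$ are \emph{defined} at all, and that is exactly what Theorem \ref{bimod} (asserting $\Theta_{\bf 1} \in D(-, \chi)$ and, dually, $\Theta_{\bf 1} \in D(\chi, -)$) — or else the eventual constancy of the two nets — guarantees.
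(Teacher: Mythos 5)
Your proposal is correct and in essence matches the paper: the paper verifies the identity by the same eventual-constancy computation (it evaluates $\bar{\tau}(\chi_M^* c) = \langle c \mid \chi_M\rangle = \chi(c)$ once $M$ contains $c$, then invokes symmetry), and it records the identification $\bar{\tau} = \Theta_{\bf 1}$ immediately afterwards. Your packaging of the argument as the $a = {\bf 1}$ case of Theorem \ref{mult}'s companion, Theorem \ref{bimod}, together with the observation that $(\bar{\tau})_M = {\bf 1}$ makes the defining nets genuinely constant, is a legitimate and slightly tidier variant of the same verification.
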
 

\begin{proof} 
If $c \in C(V)$ and $M \in \FF$ then 
$$\bar{\tau} (\chi_M^* c) = \langle \chi_M^* c | {\bf 1} \rangle = \langle c | \chi_M \rangle = \chi (c)$$
as soon as $M$ contains $c$. Passage to the limit as $M \uparrow \FF$ proves that $\chi \in D (-, \bar{\tau})$ and that $\chi \bar{\tau} = \chi$; symmetry does the rest.  
\end{proof} 

\medbreak 

Notice that $\bar{\tau} = \Theta_{{\bf 1}}$; in this sense, the multiplicative identity ${\bf 1}$ of the Clifford algebra continues to serve as a multiplicative identity for the antidual. 

\medbreak 

While on the topic of the trace, we may now return to the question that prompted our consideration of the partial product at the close of the previous section. 

\medbreak 

\begin{theorem} \label{tr}
Let $\phi, \psi \in C'(V)$. If $\phi \psi$ and $\psi \phi$ are both defined then $\tau (\phi \psi) = \tau (\psi \phi)$. 
\end{theorem}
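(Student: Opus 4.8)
The plan is to reduce the claim to the fact, already available on the Clifford algebra, that $\tau$ is a trace on $C(V)$. First I would recall that the extended trace is evaluation at ${\bf 1}$, so that under the hypothesis both $\tau(\phi\psi) = (\phi\psi)({\bf 1})$ and $\tau(\psi\phi) = (\psi\phi)({\bf 1})$ are meaningful. Then I would use the third, symmetrised form of the partial product recorded just before Theorem \ref{mult}, namely $(\phi\psi)(c) = \lim_{M\uparrow\FF} \langle c | \phi_M \psi_M \rangle$, and evaluate it at $c = {\bf 1}$: since $\langle {\bf 1} | x \rangle = \tau({\bf 1}^* x) = \tau(x)$ for every $x \in C(V)$, this yields $\tau(\phi\psi) = \lim_{M\uparrow\FF} \tau(\phi_M \psi_M)$ and, swapping the roles of $\phi$ and $\psi$, also $\tau(\psi\phi) = \lim_{M\uparrow\FF} \tau(\psi_M \phi_M)$.

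The key step is then to observe that these two nets are literally the same. For each $M \in \FF$ the approximants $\phi_M$ and $\psi_M$ lie in $C(M) \subseteq C(V)$, so the traciality of $\tau$ on $C(V)$ applies verbatim and gives $\tau(\phi_M \psi_M) = \tau(\psi_M \phi_M)$. Hence the nets $( \tau(\phi_M \psi_M) : M \in \FF )$ and $( \tau(\psi_M \phi_M) : M \in \FF )$ agree term by term; each converges by hypothesis, the first to $\tau(\phi\psi)$ and the second to $\tau(\psi\phi)$, and nets that coincide entry-wise have the same limit, so $\tau(\phi\psi) = \tau(\psi\phi)$.

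I do not anticipate a genuine obstacle here: the only substantive input is the traciality of $\tau$ on $C(V)$, while the hypothesis that both $\phi\psi$ and $\psi\phi$ are defined is exactly what licenses passing to the limit in the two nets. The one point that warrants a careful word is the choice of the symmetrised form $\lim_{M\uparrow\FF} \langle c | \phi_M \psi_M \rangle$ of the product, since it is the form that exhibits $\phi\psi$ directly in terms of the Clifford-algebra products $\phi_M \psi_M$; either of the other two defining forms, $\lim_{M\uparrow\FF} \psi(\phi_M^* c)$ or $\lim_{M\uparrow\FF} \phi(c\, \psi_M^*)$, would cost an extra line to recast into this shape before the trace argument can run.
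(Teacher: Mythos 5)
Your proposal is correct and is essentially the paper's own argument: both evaluate the product at ${\bf 1}$, reduce to the term-wise identity $\tau(\phi_M\psi_M)=\tau(\psi_M\phi_M)$ (which the paper phrases as $\langle \phi_M^*\,|\,\psi_M\rangle=\langle \psi_M^*\,|\,\phi_M\rangle$, an equivalent expression of traciality on $C(V)$), and then pass to the limit over $M\uparrow\FF$. The only cosmetic difference is that you start from the symmetrised form $\lim_{M}\langle c\,|\,\phi_M\psi_M\rangle$ while the paper starts from $\lim_M\psi(\phi_M^*c)$ at $c={\bf 1}$; these were already shown to coincide.
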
 

\begin{proof} 
Automatic: if $M \in \FF$ then 
$$\psi (\phi_M^*) = \langle \phi_M^* | \psi_M \rangle = \langle \psi_M^* | \phi_M \rangle = \phi (\psi_M^*)$$
whence passage to the limit as $M \uparrow \FF$ yields 
$$\tau (\phi \psi) = (\phi \psi) ({\bf 1}) = (\psi \phi) ({\bf 1}) = \tau (\psi \phi).$$
\end{proof} 

\medbreak 

Incidentally, with reference to the alternative view that regards the definition of the partial product as instead defining products to be partial functions, this proof shows that ${\bf 1}$ lies in the domain of the partial function $\phi \psi$ if and only if ${\bf 1}$ lies in the domain of the partial function $\psi \phi$ (with equal values). 

\medbreak 

\section*{The Clifford Hilbert Space}

\medbreak 

Thus far, we have only seen that if $\phi, \psi \in C'(V)$ then the product $\phi \psi$ is defined when at least one of its factors lies in the Clifford algebra (more precisely, in the image of its natural embedding in the antidual). Here, we show that the product is also defined when each of the factors is a {\it bounded} antifunctional; this leads to a canonical construction of the Hilbert space completion of $C(V)$ relative to its tracial inner product. 

\medbreak 

We begin by observing that the operator norm of an antifunctional may be recovered from the Clifford algebra norms of its finite-dimensional approximants. 

\medbreak 

\begin{theorem} \label{norm}
If $\phi \in C'(V)$ then 
$$|| \phi || = \sup \{ || \phi_M || : M \in \FF \} \in [0, \infty ].$$
\end{theorem}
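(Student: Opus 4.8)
The plan is to prove the two inequalities $\sup_M \|\phi_M\| \le \|\phi\|$ and $\|\phi\| \le \sup_M \|\phi_M\|$ separately, using nothing beyond the Cauchy--Schwarz inequality (valid in any inner product space, complete or not) together with the fact that $C(V)$ is the union of the subalgebras $C(M)$. Throughout I would write $\|a\|^2 = \langle a | a \rangle$ for $a \in C(V)$ and $\|\phi\| = \sup \{ |\phi(a)| : a \in C(V), \, \|a\| \le 1 \} \in [0, \infty]$; note also that $\langle \phi_M | \phi_M \rangle$ is computed identically whether one regards $\phi_M$ as lying in $C(M)$ or in $C(V)$, since the inner products agree.

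For the first inequality I would fix $M \in \FF$ and observe that $\|\phi_M\|$ is precisely the operator norm of the restricted antifunctional $\phi|_{C(M)} = \langle \cdot | \phi_M \rangle$: indeed, Cauchy--Schwarz gives $|\phi(c)| = |\langle c | \phi_M \rangle| \le \|\phi_M\|$ for $c \in C(M)$ with $\|c\| \le 1$, while taking $c = \phi_M / \|\phi_M\|$ (when $\phi_M \ne 0$) shows the bound is attained. Since this supremum ranges over a subset of the unit ball of $C(V)$, it is at most $\|\phi\|$; taking the supremum over $M \in \FF$ yields $\sup \{ \|\phi_M\| : M \in \FF \} \le \|\phi\|$.

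For the reverse inequality I would take an arbitrary $a \in C(V)$ with $\|a\| \le 1$, use Theorem \ref{M_a} to choose some $M \in \FF$ containing $M_a$, and recall from the proof of Theorem \ref{net} that then $\phi(a) = \langle a | \phi_M \rangle$. Cauchy--Schwarz gives $|\phi(a)| \le \|a\| \, \|\phi_M\| \le \sup \{ \|\phi_{M'}\| : M' \in \FF \}$, and taking the supremum over all such $a$ gives $\|\phi\| \le \sup \{ \|\phi_M\| : M \in \FF \}$. Combining the two inequalities proves the theorem.

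I do not anticipate a genuine obstacle: the argument is essentially a two-line application of Cauchy--Schwarz in each direction. The only points needing care are that the asserted identity lives in $[0, \infty]$, so the reasoning must (and does) remain valid verbatim when the common value is infinite, and that the norm of $\phi_M$ as a Hilbert-space vector coincides with the operator norm of $\phi|_{C(M)}$ --- which is immediate here because $C(M)$ is finite-dimensional, hence a genuine Hilbert space on which the Riesz representation is isometric.
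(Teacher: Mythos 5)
Your proof is correct and follows essentially the same route as the paper: the direction $\|\phi\| \le \sup_M \|\phi_M\|$ is the identical Cauchy--Schwarz argument, and your observation that the operator norm of $\phi|_{C(M)}$ is attained at $\phi_M/\|\phi_M\|$ is exactly the paper's computation $\|\phi_M\|^2 = \phi(\phi_M) \le \|\phi\|\,\|\phi_M\|$ in slightly different clothing. Nothing further is needed.
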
 

\begin{proof} 
Let $s$ be the indicated supremum: if $c \in C(V)$ then $c \in C(M)$ for some $M \in \FF$ and therefore 
$$| \phi (c) | = | \langle c | \phi_M \rangle | \leqslant || c || \: || \phi_M || \leqslant s || c ||;$$
this proves that $|| \phi || \leqslant s$. In the opposite direction, if $M \in \FF$ then $\phi_M \in C(M)$ so 
$$|| \phi_M ||^2 = \langle \phi_M | \phi_M \rangle = \phi (\phi_M) \leqslant || \phi || \: || \phi_M ||;$$
whether zero or not, $|| \phi_M ||$ is at most $|| \phi ||$. 
\end{proof} 

\medbreak 

Let us agree to write $C[V] \subset C'(V)$ for the space of all {\it bounded} antifunctionals: equivalently, those for which the supremum here is {\it finite}. 

\medbreak 

Let $M \in \FF$. As $C(M)$ is also finite-dimensional, its orthogonal space $C(M)^{\perp} \subset C(V)$ is complementary, so that 
$$C(V) = C(M)^{\perp} \oplus C(M).$$
We shall write $P_M$ for the corresponding orthogonal projector from $C(V)$ on $C(M)$ along $C(M)^{\perp}$. 

\medbreak 

Now let $\phi \in C'(V)$. We claim that if $N \in \FF$ contains $M \in \FF$ then 
$$P_M (\phi_N) = \phi_M.$$
In fact, if $c \in C(M)$ then $c \in C(N)$ so 
$$\langle c | P_M (\phi_N) \rangle = \langle c | \phi_N \rangle = \phi (c) = \langle c | \phi_M \rangle.$$
It follows that $\phi_N \in C(N)$ decomposes orthogonally as 
$$\phi_N = (\phi_N - \phi_M) + \phi_M \in C(M)^{\perp} \oplus C(M)$$
and that 
$$|| \phi_N ||^2 = || \phi_N - \phi_M ||^2 + || \phi_M ||^2.$$
In particular, it follows that the net $(|| \phi_M || : M \in \FF )$ is increasing. 

\medbreak 

We are now able to improve Theorem \ref{net} for bounded antifunctionals. 

\medbreak 

\begin{theorem} \label{normnet}
If $\phi \in C[V]$ then the net $( \Theta_{\phi_M} : M \in \FF )$ is norm convergent to $\phi$. 
\end{theorem}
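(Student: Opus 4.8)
The plan is to bound the operator norm $|| \phi - \Theta_{\phi_M} ||$ directly by an explicit quantity that tends to $0$ as $M \uparrow \FF$, exploiting the orthogonal decomposition $\phi_N = (\phi_N - \phi_M) + \phi_M$ set up in the paragraph immediately preceding the statement.

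First I would record the preliminaries. By Theorem \ref{norm} the quantity $s = || \phi || = \sup \{ || \phi_M || : M \in \FF \}$ is finite, this being precisely the hypothesis $\phi \in C[V]$; and since the net $( || \phi_M || : M \in \FF )$ is increasing and bounded above by $s$, it converges to $s$, so that $|| \phi_M ||^2 \ra s^2$ as $M \uparrow \FF$.

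Next, fix $M \in \FF$ and estimate $(\phi - \Theta_{\phi_M})(c)$ for an arbitrary $c \in C(V)$ with $|| c || \leqslant 1$. Choosing $N \in \FF$ with $c \in C(N)$ and $N \supseteq M$, we have $\phi(c) = \langle c | \phi_N \rangle$ while $\Theta_{\phi_M}(c) = \langle c | \phi_M \rangle$, so that $(\phi - \Theta_{\phi_M})(c) = \langle c | \phi_N - \phi_M \rangle$ and hence $| (\phi - \Theta_{\phi_M})(c) | \leqslant || \phi_N - \phi_M ||$ by Cauchy--Schwarz. Since $N \supseteq M$, the identity $|| \phi_N ||^2 = || \phi_N - \phi_M ||^2 + || \phi_M ||^2$ gives $|| \phi_N - \phi_M ||^2 = || \phi_N ||^2 - || \phi_M ||^2 \leqslant s^2 - || \phi_M ||^2$; taking the supremum over all such $c$ yields $|| \phi - \Theta_{\phi_M} || \leqslant \sqrt{ s^2 - || \phi_M ||^2 }$.

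Finally, letting $M \uparrow \FF$ and invoking $|| \phi_M ||^2 \ra s^2$, the right-hand side tends to $0$; this is the asserted norm convergence of $( \Theta_{\phi_M} : M \in \FF )$ to $\phi$. As for obstacles, there is little of substance here: the only points requiring care are the choice of $N$ large enough for the orthogonal-decomposition identity to apply — one must enlarge $M_c$ so that it contains $M$ — and the remark that an increasing net of non-negative reals converges to its supremum, which here equals $|| \phi ||^2$ by Theorem \ref{norm}; both ingredients were essentially prepared in the paragraphs preceding the statement. One could alternatively show that the net $( \Theta_{\phi_M} : M \in \FF )$ is norm-Cauchy, since $|| \Theta_{\phi_N} - \Theta_{\phi_M} || = || \phi_N - \phi_M ||$ (the embedding $\Theta$ being isometric) and $|| \phi_N - \phi_M ||^2 = || \phi_N ||^2 - || \phi_M ||^2$ whenever $N \supseteq M$, while the real net $( || \phi_M ||^2 : M \in \FF )$ is monotone and bounded hence Cauchy, and then identify its norm limit with $\phi$ using Theorem \ref{net} and the uniqueness of weak limits; the direct estimate above is shorter and sidesteps any appeal to completeness.
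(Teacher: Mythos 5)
Your proof is correct and follows essentially the same route as the paper's: both rest on the Pythagorean identity $\|\phi_N\|^2 = \|\phi_N - \phi_M\|^2 + \|\phi_M\|^2$ together with the fact that the increasing net $\|\phi_M\|$ converges to $\|\phi\|$ (Theorem \ref{norm}). The only cosmetic difference is that the paper identifies $(\phi - \Theta_{\phi_M})_N = \phi_N - \phi_M$ and obtains the exact identity $\|\phi\|^2 = \|\phi - \Theta_{\phi_M}\|^2 + \|\phi_M\|^2$, whereas you settle for the one-sided bound $\|\phi - \Theta_{\phi_M}\| \leqslant \sqrt{\|\phi\|^2 - \|\phi_M\|^2}$, which suffices equally well.
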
 

\begin{proof} 
Let $N \in \FF$ contain $M \in \FF$. Note that 
$$(\phi - \Theta_{\phi_M})_N = \phi_N - \phi_M$$
for if $c \in C(N)$ then 
$$\langle c | (\phi - \Theta_{\phi_M})_N \rangle = (\phi - \Theta_{\phi_M}) (c) = \phi (c) - \langle c | \phi_M \rangle = \langle c | \phi_N - \phi_M \rangle.$$
Accordingly, the Pythagorean identity displayed above may be written as 
$$|| \phi_N ||^2 = ||(\phi - \Theta_{\phi_M})_N ||^2 + ||\phi_M||^2$$
whence passage to the supremum as $N$ runs over $\FF$ yields  
$$|| \phi ||^2 = || \phi - \Theta_{\phi_M} ||^2 + || \phi_M ||^2$$
by Theorem \ref{norm}. Finally, as $M \uparrow \FF$ so $|| \phi_M || \uparrow || \phi ||$ and in conclusion $|| \phi - \Theta_{\phi_M} || \ra 0$. 
\end{proof} 

\medbreak 

We now see that $C[V]$ is a canonical model for the Hilbert space completion of $C(V)$ as an inner product space: on the one hand, if $a \in C(V)$ then $|| \Theta_a || = || a ||$ so the canonical linear embedding $\Theta$ defines an isometry $C(V) \ra C[V]$; on the other hand, Theorem \ref{normnet} implies that the image of $C(V)$ under $\Theta$ is dense in $C[V]$.  The inner product in $C[V]$ is of course given by the familiar rule
$$\langle \phi | \psi \rangle = \sum_{n = 0}^{3} i^{-n} || \phi + i^n \psi ||^2.$$

\medbreak 

The subspace $C[V] \subset C'(V)$ is clearly invariant under both involution and the grading automorphism: indeed, if $\phi \in C[V]$ then $|| \phi^* || = || \phi ||$ and $|| \gamma (\phi) || = || \phi ||$. It is also a sub $C(V)$-bimodule: in the following one-sided statement of this fact, $||| a |||$ denotes the operator norm of multiplication by $a$ in $C(V)$ on either left or right; this is also the (finite) norm of $a$ as an element of the Clifford C${^*}$-algebra as in Section 1.2 of [1]. 

\medbreak 

\begin{theorem} \label{subbi}
If $a \in C(V)$ and $\phi \in C[V]$ then $a \cdot \phi \in C[V]$ and $|| a \cdot \phi || \leqslant ||| a ||| \: || \phi ||.$
\end{theorem}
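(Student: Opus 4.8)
The plan is to reduce the claim to a statement about the finite-dimensional approximants $(a\cdot\phi)_M$ and then invoke Theorem~\ref{norm}. The first step is to identify $(a\cdot\phi)_M$ for $M\in\FF$ containing $a$: as observed in the proof of Theorem~\ref{submod}, in that case $(a\cdot\phi)_M = a\,\phi_M$. For general $M$ one can enlarge to $M' \supseteq M$ with $a\in C(M')$ and apply the projection identity $P_M\big((a\cdot\phi)_{M'}\big) = (a\cdot\phi)_M$ from the discussion preceding Theorem~\ref{normnet}, giving $(a\cdot\phi)_M = P_M(a\,\phi_M)$.

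Second, I would estimate the Clifford-inner-product norm of $(a\cdot\phi)_M$. Since orthogonal projection is norm-decreasing, $\|(a\cdot\phi)_M\| = \|P_M(a\,\phi_{M'})\| \leqslant \|a\,\phi_{M'}\|$; and $\|a\,\phi_{M'}\| \leqslant |||a|||\,\|\phi_{M'}\|$ by the definition of $|||a|||$ as the operator norm of left multiplication by $a$ on $C(V)$ with its tracial inner product. Finally, $\|\phi_{M'}\| \leqslant \|\phi\|$ by Theorem~\ref{norm}. Chaining these gives $\|(a\cdot\phi)_M\| \leqslant |||a|||\,\|\phi\|$ for every $M\in\FF$, and a second application of Theorem~\ref{norm} — this time to the antifunctional $a\cdot\phi$ — yields $\|a\cdot\phi\| = \sup_M \|(a\cdot\phi)_M\| \leqslant |||a|||\,\|\phi\| < \infty$, so in particular $a\cdot\phi\in C[V]$.

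The one point that needs a little care — and which I expect to be the only real obstacle — is justifying that $|||a|||$, defined as the operator norm of multiplication by $a$ acting on the \emph{inner product space} $C(V)$, is finite and agrees with the Clifford $C^*$-algebra norm; but the paper has already flagged this by citing Section~1.2 of [1], so I would simply quote that. A minor subtlety is handling arbitrary $M$ rather than only those containing $a$, which the projection identity disposes of cleanly; alternatively one could bypass this by noting that $\{M\in\FF : a\in C(M)\}$ is cofinal in $\FF$, so the supremum over all $M$ equals the supremum over that cofinal set, where $(a\cdot\phi)_M = a\,\phi_M$ holds directly. Either route works, and the argument is otherwise a routine chain of inequalities.
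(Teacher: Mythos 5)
Your argument is correct, but it takes a genuinely different route from the paper. The paper's proof is a direct two-line estimate on the antifunctional itself: for $c \in C(V)$ one has $|(a \cdot \phi)(c)| = |\phi(a^* c)| \leqslant \| \phi \| \, \| a^* c \| \leqslant \| \phi \| \, ||| a^* ||| \, \| c \| = ||| a ||| \, \| \phi \| \, \| c \|$, which immediately gives boundedness and the norm bound without ever mentioning the approximants. You instead compute the approximants $(a \cdot \phi)_M$, handle general $M$ via the projection identity (or cofinality plus monotonicity of the net $\| (a\cdot\phi)_M \|$, both of which are legitimately available from the discussion preceding Theorem \ref{normnet}), and then apply Theorem \ref{norm} twice. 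Every step checks out: $(a\cdot\phi)_M = a\,\phi_M$ when $a \in C(M)$ is exactly the observation from the proof of Theorem \ref{submod}, projections are contractive, and $\| a\,\phi_{M'} \| \leqslant ||| a |||\, \| \phi_{M'} \|$ is the definition of $||| a |||$. Both proofs ultimately rest on the same nontrivial input — the finiteness of $||| a |||$, i.e.\ boundedness of Clifford multiplication for the tracial inner product, quoted from [1] — so neither is more general; the paper's is simply shorter, while yours has the side benefit of recording the formula $(a \cdot \phi)_M = P_M(a\,\phi_{M'})$ for the approximants of a module product, which is in the spirit of how Theorems \ref{normnet} and \ref{[][]} are proved.
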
 

\begin{proof} 
If $c \in C(V)$ then 
$$| (a \cdot \phi) (c)| = | \phi (a^* c)| \leqslant || \phi || \: || a^* c|| \leqslant || \phi || \: ||| a^* ||| \: ||c|| = ||| a ||| \: || \phi || \: ||c||$$
so that $a \cdot \phi$ is indeed bounded and has operator norm at most equal to $||| a ||| \: || \phi ||$. 
\end{proof} 

\medbreak

We are now in a position to prove that the partial product is defined for any pair of bounded antifunctionals: that is, if $\chi \in C[V]$ then $C[V] \subseteq D(\chi, -) \cap D(-, \chi)$. 

\medbreak 

\begin{theorem} \label{[][]}
If $\phi, \psi \in C[V]$ then the product $\phi \psi \in C'(V)$ is defined: explicitly, if $c \in C(V)$ then $$(\phi \psi) (c) = \langle \phi^* \cdot c | \psi \rangle = \langle c \cdot \psi^* | \phi \rangle.$$ 
\end{theorem}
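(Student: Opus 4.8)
The plan is to produce, for each fixed $c \in C(V)$, a description of the net $(\psi(\phi_M^* \, c) : M \in \FF)$ that makes its convergence transparent: the point is that this net is eventually a net of inner products of finite-dimensional approximants, and that, because both factors are \emph{bounded}, Theorem \ref{normnet} lets us pass to the limit inside the Hilbert space $C[V]$.

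First I would assemble the bookkeeping about approximants. By Theorem \ref{subbi} — together with its right-handed counterpart, obtained by applying $^*$ and using that $C[V]$ is $^*$-invariant — the antifunctional $\phi^* \cdot c$ again lies in $C[V]$. Moreover, exactly as in the proofs of Theorems \ref{submod} and \ref{mid}, and using Theorem \ref{prop}, as soon as $M \in \FF$ contains $M_c$ one has $(\phi^* \cdot c)_M = (\phi^*)_M \, c = \phi_M^* \, c$, which lies in $C(M)$; consequently, for all such $M$,
$$\psi(\phi_M^* \, c) = \langle \phi_M^* \, c \: | \: \psi_M \rangle = \langle (\phi^* \cdot c)_M \: | \: \psi_M \rangle = \langle \Theta_{(\phi^* \cdot c)_M} \: | \: \Theta_{\psi_M} \rangle,$$
the last equality holding because the isometry $\Theta$ preserves inner products (by polarization).

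Now I would invoke Theorem \ref{normnet} for the two bounded antifunctionals $\phi^* \cdot c$ and $\psi$: the nets $\Theta_{(\phi^* \cdot c)_M}$ and $\Theta_{\psi_M}$ converge in the norm of $C[V]$ to $\phi^* \cdot c$ and to $\psi$ respectively. Since the inner product of the Hilbert space $C[V]$ is continuous along norm-convergent nets — one splits off one factor at a time and bounds the cross term using $|| \psi_M || \leqslant || \psi ||$ — the scalars $\langle \Theta_{(\phi^* \cdot c)_M} \: | \: \Theta_{\psi_M} \rangle$ converge to $\langle \phi^* \cdot c \: | \: \psi \rangle$. Hence the net $(\psi(\phi_M^* \, c) : M \in \FF)$, being eventually equal to a convergent net, converges to $\langle \phi^* \cdot c \: | \: \psi \rangle$; as $c$ was arbitrary, this says precisely that $\phi \psi$ is defined, with $(\phi\psi)(c) = \langle \phi^* \cdot c \: | \: \psi \rangle$. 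The companion formula $(\phi\psi)(c) = \langle c \cdot \psi^* \: | \: \phi \rangle$ then follows either by repeating the argument verbatim with the equivalent defining net $\phi(c \, \psi_M^*)$, or simply from the identity $\psi(\phi_M^* \, c) = \phi(c \, \psi_M^*)$ already recorded when the partial product was introduced. The one step that merits care is the passage to the limit in $\langle \Theta_{(\phi^* \cdot c)_M} \: | \: \Theta_{\psi_M} \rangle$, where \emph{both} entries vary with $M$: this must be carried out by the standard two-step estimate rather than naively, but apart from this the proof consists only of the routine identification of approximants already performed in the earlier results.
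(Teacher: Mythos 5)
Your proposal is correct and follows essentially the same route as the paper: rewrite $\psi(\phi_M^*\,c)$ as $\langle \Theta_{(\phi^*\cdot c)_M} \mid \Theta_{\psi_M}\rangle$ for $M \supseteq M_c$, then pass to the limit using Theorem \ref{subbi} (so that $\phi^*\cdot c \in C[V]$) and the norm convergence of Theorem \ref{normnet}. You are in fact slightly more explicit than the paper on the two points it glosses over — the need for the right-handed version of Theorem \ref{subbi}, and the two-step estimate justifying the limit of an inner product in which both entries vary.
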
 

\begin{proof} 
If $M \in \FF$ contains $M_c$ then 
$$\psi (\phi_M^* c) = \langle \phi_M^* c | \psi_M \rangle = \langle (\phi^* \cdot c)_M| \psi_M \rangle = \langle \Theta_{(\phi^* \cdot c)_M} | \Theta_{\psi_M} \rangle$$
because $\Theta$ is isometric. Pass to the $M \uparrow \FF$ limit:  bearing Theorem \ref{subbi} in mind, Theorem \ref{normnet} implies $\Theta_{(\phi^* \cdot c)_M} \ra \phi^* \cdot c$ and $\Theta_{\psi_M} \ra \psi$ in norm, whence $\psi (\phi_M^* c) \ra \langle \phi^* \cdot c | \psi \rangle$ as required. 
\end{proof} 

\medbreak

Notice we do not assert here that the product $\phi \psi \in C'(V)$ invariably lies in $C[V]$. 

\medbreak 

In particular, it now follows that if $\phi, \psi \in C[V]$ then 
$$\langle \phi | \psi \rangle = (\phi^* \psi ) ({\bf 1}) = \tau (\phi^* \psi).$$
Thus, the formula by which $\langle \bullet | \bullet \rangle$ is defined on $C(V)$ continues to hold on $C[V]$. 

\medbreak 

Incidentally, it is also now clear that our partial product really is partial: for example, if $\phi \in C'(V)$ is {\it unbounded} then ${\bf 1}$ does not lie in the domain of the partial functional $\phi^* \phi$ so that $\phi^* \notin D (-, \phi)$ and the product $\phi^* \phi$ is not defined as a functional $C(V) \ra \mathbb{C}$. 

\bigbreak 

\begin{center} 
{\small R}{\footnotesize EFERENCES}
\end{center} 
\bigbreak 

[1] R.J. Plymen and P.L. Robinson, {\it Spinors in Hilbert Space}, Cambridge Tracts in Mathematics {\bf 114} (1994). 

\medbreak 

[2] P.L. Robinson, {\it `Twisted Duality' for Clifford Algebras}, arXiv 1407.1420 (2014). 

\medbreak

\end{document}